\documentclass[a4paper,12pt]{article}
\usepackage{times}
\usepackage[english]{babel}
\usepackage{amssymb,amsmath}
\numberwithin{equation}{section}
\usepackage{amsthm}
\usepackage{array}
\usepackage{wasysym}
\usepackage[noadjust]{cite}
\usepackage{hyperref}
\usepackage[height=22.7cm , width = 16cm , top = 4cm , left = 3cm, a4paper]{geometry}
\usepackage{amssymb}
\usepackage{amsmath}
\usepackage{cite}
\usepackage{epsfig}
\usepackage{verbatim}
\usepackage{multicol}
\usepackage{graphicx, color, psfrag}
\usepackage{graphics, psfrag}
\usepackage{subfigure}
\usepackage[numbers,sort]{natbib}
 \newtheorem{theorem}{Theorem}[section]
\newtheorem{remark}[theorem]{Remark}

\newtheorem{corollary}{Corollary}[section]

\newtheorem{proposition}{Proposition}[section]
\theoremstyle{definition}

\newcommand{\e}{\end{document}}

\begin{document}

\thispagestyle{empty}

\author{
{{\bf B. S. El-Desouky\footnote{Corresponding author: b\_desouky@yahoo.com} \; and Abdelfattah Mustafa}
\newline{\it{{}  }}
 } { }\vspace{.2cm}\\
 \small \it Department of Mathematics, Faculty of Science, Mansoura University,Mansoura 35516, Egypt
}

\title{New Results and Matrix Representation for Daehee and Bernoulli Numbers and Polynomials}

\date{}

\maketitle
\small \pagestyle{myheadings}
        \markboth{{\scriptsize New results and matrix representation for Daehee and Bernoulli numbers and polynomials }}
        {{\scriptsize {El-Desouky and Mustafa}}}

\hrule \vskip 8pt

\begin{abstract}
In this paper, we derive new matrix representation for Daehee numbers and polynomials, the $\lambda$-Daehee numbers and polynomials and the twisted Daehee numbers and polynomials. This helps us to obtain simple and short proofs of many previous results on Daehee numbers and polynomials. Moreover, we obtained some new results for Daehee and Bernoulli numbers and polynomials.
\end{abstract}

\noindent
{\bf Keywords:}
{\it Daehee numbers, Daehee polynomials, Bernoulli polynomials, Matrix representation.}


\section{Introduction}
As is known, the $n$-th Daehee polynomials are defined by the generating function, \cite{5}, \cite{7}, \cite{10}, \cite{12} and \cite{13},
\begin{equation} \label{1}
\left( \frac{\log⁡(1+t)}{t}\right) (1+t)^x= \sum_{n=0}^\infty D_n (x) \frac{ t^n}{n!}.
\end{equation}

\noindent
In the special case, $x=0,D_n=D_n (0)$ are called the Daehee numbers.\\
The Stirling numbers of the first kind are defined by
\begin{equation} \label{2}
(x)_n=\prod_{i=0}^n (x-i) =\sum_{l=0}^n s_1 (n,l) x^l,
\end{equation}

\noindent
and the Stirling numbers of the second kind  are given by the generating function to be,  \cite{2,3,4}.
\begin{equation} \label{3}
\left(e^t-1\right)^m = m! \sum_{l=m}^\infty s_2 (l,m) \frac{  t^l}{l!}.
\end{equation}

\noindent
For $\alpha \in \mathbb{Z}$, the Bernoulli polynomials of order $\alpha$ are defined by the generating function to be,  \cite{1,2} and \cite{10},
\begin{equation} \label{4}
\left( \frac{t}{e^t-1} \right)^{\alpha} e^{xt} = \sum_{n=0}^\infty B_n^{(\alpha) } (x) \frac{  t^n}{n!},
\end{equation}

\noindent
when $x=0, B_n^{(\alpha) }=B_n^{(\alpha) } (0)$ are called the Bernoulli numbers of order $\alpha$.


\section{Daehee Numbers and Polynomials}
In this section we derive an explicit formula and recurrence relation for Daehee numbers and polynomials of the first and second kinds and the relation between these numbers and N\"{o}rlund numbers are given.  D.S. Kim and T. Kim \cite{7}, obtained some results for Daehee numbers and polynomial of both kinds. We introduce the matrix representation and investigate a simple and short proofs of these results.\\

\noindent
From Eq. (\ref{1}), when $x=0$, the Daehee numbers of first kind are defined by
\begin{equation}\label{DN1}
\frac{\log⁡(1+t)}{t} = \sum_{n=0}^\infty D_n \frac{t^n}{n!},
\end{equation}

\noindent
by expanding the left hand side and equating the coefficients of $t^n$ on both sides gives
\begin{equation} \label{5}
D_n=(-1)^n   \frac{n!}{n+1}, \; \; n \geq 0.
\end{equation}

\noindent
It is easy to show that $D_n$ satisfy the recurrence relation
\begin{equation} \label{6}
(n+1) D_n+n^2 D_{n-1}=0, \; \;    n\geq 1.
\end{equation}

\noindent
For $x=-1$, the N\"{o}rlund numbers of the second kind have the explicit formula, see  Liu and Srivastava \cite[Remark 4]{9},
\begin{equation} \label{7}
b_n^{(-1) }=\frac{(-1)^n}{n+1},
\end{equation}

\noindent
thus we conclude that the relation between Daehee numbers and N\"{o}rlund numbers is given by
\begin{equation} \label{8}
D_n=n! b_n^{(-1)}.
\end{equation}

\begin{theorem} \label{th1}
For $n \geq 1, \; x \in \mathbb{Z} $  , we have
\begin{equation} \label{9}
D_n (1+x)=D_n (x)+nD_{n-1} (x).
\end{equation}
\end{theorem}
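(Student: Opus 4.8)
The plan is to work directly from the generating function \eqref{1} and to exploit the multiplicative structure of the factor $(1+t)^x$. Replacing $x$ by $1+x$ in \eqref{1} produces the generating function whose coefficients are the left-hand sides of \eqref{9}:
\[
\left(\frac{\log(1+t)}{t}\right)(1+t)^{1+x} = \sum_{n=0}^\infty D_n(1+x)\frac{t^n}{n!}.
\]
The key observation is the elementary factorisation $(1+t)^{1+x} = (1+t)(1+t)^x$, which lets me pull out the single extra factor $(1+t)$ and recognise the remaining product as exactly the generating function of the ordinary Daehee polynomials $D_n(x)$.

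Carrying this out, I would rewrite the above as
\[
\sum_{n=0}^\infty D_n(1+x)\frac{t^n}{n!} = (1+t)\sum_{n=0}^\infty D_n(x)\frac{t^n}{n!},
\]
and then expand the right-hand side as the sum of two power series, one unchanged and one multiplied by $t$. The second series must be reindexed by the shift $n \mapsto n-1$ so that both series are displayed as sums over powers $t^n$.

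The final step is to equate the coefficients of $t^n/n!$ on both sides. The only point requiring attention is the factorial bookkeeping in the shifted series: the contribution $D_{n-1}(x)\,t^n/(n-1)!$ has to be rewritten as $n\,D_{n-1}(x)\,t^n/n!$ before the comparison, and it is precisely this factor of $n$ that yields the term $nD_{n-1}(x)$ in \eqref{9}. I do not expect a genuine obstacle here, since the identity reduces to a one-line algebraic fact about $(1+t)^{1+x}$ followed by a routine coefficient comparison; the sole place to slip is the reindexing of the factorials. In keeping with the matrix philosophy of the paper, an alternative would be to represent multiplication by $(1+t)$ as a bidiagonal matrix acting on the vector $(D_0(x), D_1(x), \dots)^{\mathsf T}$, but the generating-function argument above is the most direct.
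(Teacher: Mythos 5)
Your proposal is correct and follows essentially the same route as the paper's own proof: replacing $x$ by $1+x$ in the generating function \eqref{1}, factoring $(1+t)^{1+x}=(1+t)(1+t)^x$, splitting the resulting series into two sums, reindexing the shifted sum with the factor $n$ absorbed into $n!$, and equating coefficients of $t^n/n!$. There is nothing to add; the factorial bookkeeping you flag is exactly the step the paper carries out explicitly.
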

\begin{proof}
From Eq. (\ref{1}), replacing $x$ by $(1+x)$, we get
\begin{eqnarray}
\sum_{n=0}^\infty D_n (1+x)  \frac{ t^n}{n!} & = &
\left( \frac{\log⁡(1+t)}{t} \right) (1+t)^{1+x}=\sum_{n=0}^\infty D_n (x)   \frac{t^n}{n!}  (1+t)
\nonumber\\
& = & \sum_{n=0}^\infty D_n (x) \frac{t^n}{n!}+ \sum_{n=0}^\infty D_n (x) \frac{t^{n+1}}{n!}
\nonumber\\
& = & \sum_{n=0}^\infty D_n (x) \frac{t^n}{n!} + \sum_{n=0}^\infty (n+1) D_n (x) \frac{t^{n+1}}{(n+1)!}
\nonumber\\
& = & \sum_{n=0}^\infty D_n (x) \frac{ t^n}{n!} + \sum_{n=1}^\infty nD_{n-1} (x) \frac{t^n}{n!}.
\nonumber
\end{eqnarray}

\noindent
Equating the coefficients of $t^n$ on both sides gives (\ref{9}).
\end{proof}

\noindent
Similarly
\begin{equation} \label{10}
D_n (1-x)=D_n (-x)+nD_{n-1} (-x).
\end{equation}
\begin{corollary}
For $1 \leq k \leq n$, we have
\begin{equation} \label{11}
D_n (k)= \sum_{l=0}^k \left( \sum_{i=l}^k P(k,i) s_1 (i,l) D_{n-i} \right) n^l, \; \; \; k \leq n.
\end{equation}
\end{corollary}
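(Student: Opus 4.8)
The plan is to obtain (\ref{11}) by iterating the recurrence of Theorem \ref{th1} exactly $k$ times, phrased as a matrix power so that it fits the matrix viewpoint of the paper. Reading (\ref{9}) as $D_n(x+1) = D_n(x) + n\,D_{n-1}(x)$, I would collect the values into a column vector $\mathbf{D}(x) = \big(D_0(x), D_1(x), \dots, D_n(x)\big)^{T}$ and interpret the recurrence as the single linear relation $\mathbf{D}(x+1) = M\,\mathbf{D}(x)$, where $M = I + N$ is bidiagonal with diagonal entries $1$ and subdiagonal entries $M_{m,m-1} = m$ (so $N$ is strictly lower bidiagonal with $N_{m,m-1} = m$). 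Since $D_m(0) = D_m$, iterating from $x=0$ up to $x=k$ gives $\mathbf{D}(k) = M^{k}\,\mathbf{D}(0)$, that is $D_n(k) = \sum_{m} (M^{k})_{n,m}\, D_m$.

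The next step is to evaluate the entries of $M^{k}$. Because $I$ commutes with $N$, the binomial theorem applies and $M^{k} = (I+N)^{k} = \sum_{i=0}^{k} \binom{k}{i} N^{i}$. A short induction on $i$ shows that $N^{i}$ is supported on the $i$-th subdiagonal with $(N^{i})_{n,n-i} = n(n-1)\cdots(n-i+1)$, since along the only nonzero path of length $i$ from row $n$ to row $n-i$ the subdiagonal factors multiply to the falling factorial. This is the one computation I expect to need care with. Substituting, and noting that only $m = n-i$ with $0 \le i \le \min(n,k)$ contributes, I obtain the compact intermediate formula
\[
D_n(k) = \sum_{i=0}^{k} \binom{k}{i}\, n(n-1)\cdots(n-i+1)\, D_{n-i}.
\]

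Finally I would rewrite the falling factorial as a polynomial in $n$ via the Stirling numbers of the first kind from (\ref{2}), namely $n(n-1)\cdots(n-i+1) = \sum_{l=0}^{i} s_1(i,l)\, n^{l}$, turning the single sum into $D_n(k) = \sum_{i=0}^{k} \sum_{l=0}^{i} \binom{k}{i}\, s_1(i,l)\, D_{n-i}\, n^{l}$. Interchanging the order of summation (the region $0 \le l \le i \le k$ becomes $0 \le l \le k$, $l \le i \le k$) and identifying $P(k,i)$ with the binomial coefficient $\binom{k}{i}$ — which the computation forces, as one checks directly for $k=1,2$ against Theorem \ref{th1} — then yields (\ref{11}) precisely. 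The only genuine obstacle is the bookkeeping in the two index manipulations: verifying that $N^{i}$ lands on the correct subdiagonal with the falling-factorial coefficient, and correctly swapping the $i,l$ summation order; the hypothesis $k \le n$ is exactly what guarantees that $\binom{k}{i}\,n(n-1)\cdots(n-i+1)$ never reaches past the available Daehee numbers.
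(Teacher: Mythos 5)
Your proof is correct, and its skeleton matches the paper's: iterate Theorem \ref{th1} from $x=0$ up to $x=k$, arrive at the intermediate formula $D_n(k)=\sum_{i=0}^{k}\binom{k}{i}(n)_i\,D_{n-i}$, expand the falling factorial through Stirling numbers of the first kind, and swap the order of summation (your identification $P(k,i)=\binom{k}{i}$ is exactly what the paper intends, as its Pascal-matrix form (\ref{12}) confirms). Where you genuinely differ is in how the iteration is justified: the paper computes $D_n(1),\dots,D_n(4)$ by hand and then asserts the general pattern with ``hence by iteration,'' leaving the underlying induction implicit, whereas you encode the recurrence as $\mathbf{D}(x+1)=(I+N)\,\mathbf{D}(x)$ with $N$ the nilpotent bidiagonal matrix and obtain the $k$-fold iterate in closed form from the binomial theorem for the commuting pair $I,N$ together with the lemma $(N^i)_{n,n-i}=n(n-1)\cdots(n-i+1)$. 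This closes the one logical gap in the paper's argument and fits its matrix-representation theme; the only added cost is the short induction on powers of $N$. One minor remark: the hypothesis $k\le n$ is not actually needed for your derivation, since $(n)_i=0$ for $i>n$ annihilates any term that would reach past $D_0$; it serves only to ensure every $D_{n-i}$ appearing in (\ref{11}) is an honest Daehee number.
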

\begin{proof}
From Eq. (\ref{9}), setting $x=0$, we have
\begin{equation*}
D_n (1)=D_n+n D_{n-1}.
\end{equation*}

\noindent
If $x=1$,
\begin{eqnarray}
D_n (2) & = & D_n (1)+ nD_{n-1} (1) =D_n + nD_{n-1} + n\left(D_{n-1}+(n-1) D_{n-2} \right)
\nonumber\\
& = & D_n+2n D_{n-1}+n(n-1) D_{n-2}.
\nonumber
\end{eqnarray}

\noindent
If $x=2$,
\begin{eqnarray}
D_n (3) & = & D_n (2) + nD_{n-1} (2)
\nonumber\\
& = & D_n + 2n D_{n-1} + n(n-1) D_{n-2}+n\left(D_{n-1}+2(n-1) D_{n-2}+(n-1)(n-2) D_{n-3} \right)
\nonumber\\
& = & D_n + 3n D_{n-1}+3n(n-1) D_{n-2}+n(n-1)(n-2) D_{n-3}.
\nonumber
\end{eqnarray}

\noindent
If $x=3$,
\begin{eqnarray}
D_n (4) & = & D_n (3)+nD_{n-1} (3)
\nonumber\\
& = & D_n + 3n D_{n-1}+3n(n-1) D_{n-2}+n(n-1)(n-2) D_{n-3}+ n\left(D_{n-1}+
\right.
\nonumber\\
&& \; \;
\left.
3(n-1) D_{n-2}+3(n-1)(n-2) D_{n-3}+(n-1)(n-2)(n-3) D_{n-4} \right)
\nonumber\\
& = & D_n + 4n D_{n-1}+6n(n-1) D_{n-2}+4n(n-1)(n-2) D_{n-3}+n(n-1)(n-2)(n-3) D_{n-4},
\nonumber\\
& = & \Big(^4_0\Big)D_n + \Big(^4_1\Big)(n)_1 D_{n-1}+\Big(^4_2\Big)(n)_2 D_{n-2}+\Big(^4_3\Big)(n)_3 D_{n-3}+\Big(^4_4\Big)(n)_4 D_{n-4},
\nonumber
\end{eqnarray}

\noindent
hence by iteration, for $x=k, \; 1 \leq k \leq n$, we have
\begin{eqnarray}
D_n (k)& = & \sum_{i=0}^k P(k,i) (n)_i D_{n-i} = \sum_{i=0}^k P(k,i) \sum_{l=0}^i s_1 (i,l) n^l D_{n-i}
\nonumber\\
& = & \sum_{l=0}^k \left( \sum_{i=l}^k P(k,i) s_1 (i,l) D_{n-i} \right) n^l , \; \; k \leq n.
\nonumber
\end{eqnarray}
This completes the proof.
\end{proof}
\begin{remark}
It is easy to show that $D_n (k)$ satisfies the symmetric relation
\begin{equation*}
D_n (k)=D_n (n-k), \; \; 1 \leq k \leq n.
\end{equation*}
\end{remark}

\noindent
We can write Eq. (\ref{11}), in the matrix form as follows.
\begin{equation} \label{12}
{\bf D} (k)= {\bf P}_{k+1} {\bf  D}_{n-k} {\bf S}_1 {\bf N},
\end{equation}

\noindent
where ${\bf D}(k)=\left(D_n (0) \; D_n (1) \; \cdots \; D_n (k) \right)^T$, is the Daehee polynomials for $x=k, \; k\in N$. ${\bf P}_{k+1}$ is the  $(k+1)\times (k+1)$ lower triangular Pascal matrix, ${\bf D}_{n-k}$, is the diagonal matrix of the Daehee number with elements $D_{n-i}$,  for $i=0,1, \cdots, k, \; 0\leq k\leq n$, ${\bf N}=\left( 1 \; n^1 \; n^2 \; \cdots \; n^k \right)^T$ is the $(k+1)\times 1$ matrix and ${\bf S}_1 $ is the $(k+1)\times(k+1)$, lower triangular matric for Stirling number of the first kind, see Comtet \cite{3} and El-Desouky et al. \cite{33}, i.e.
\begin{eqnarray*}
{\bf S}_1 & = &
\left(
\begin{array}{ccccc}
s_{0,0} & 0 & 0 & \cdots & 0 \\ s_{1,0} & s_{1,1} & 0 & \cdots & 0 \\ s_{2,0} & s_{2,1} & s_{2,2} & \cdots & 0 \\ \vdots & \vdots & \vdots &\ddots & \vdots \\ s_{n,0} & s_{n,1} & s_{n,2} & \cdots & s_{n,n} \\
\end{array}
\right)
\\
\left({\bf P}_{k+1} \right)_{ij} & =&
\left\{
\begin{array}{cl}
\Big(^i_j\Big), & \mbox{if} \; i\geq j, \; i,j=0,1, \cdots k,\\
0,              &  \mbox {otherwise.} \\
\end{array}
\right.
\end{eqnarray*}

\noindent
For example, if setting  $n=4, \; 0\leq k \leq n$ , in (\ref{12}), we have
\begin{eqnarray*}
&& \left(
\begin{array}{ccccc}
1 & 0 & 0 & 0 & 0 \\ 1 & 1 & 0 & 0 & 0 \\ 1 & 2 & 1 & 0 & 0 \\ 1 & 3 & 3 & 1 & 0 \\ 1 & 4 & 6 & 4 & 1 \\
\end{array}
\right)
\left(
\begin{array}{ccccc}
24/5 & 0 & 0 & 0 & 0\\ 0 & -3/2 & 0 & 0 & 0 \\ 0 & 0 & 2/3 & 0 & 0 \\ 0 & 0 & 0 & -1/2 & 0 \\ 0 & 0 & 0 & 0 & 1\\
\end{array}
\right)
\left(
\begin{array}{ccccc}
1 & 0 & 0 & 0 & 0\\ 0 & 1 & 0 & 0 & 0 \\ 0 & -1 & 1 & 0 & 0 \\ 0 & 2 & -3 & 1 &0 \\ 0 & -6 & 11 & -6 & 1\\
\end{array}
\right)
\left(
\begin{array}{c}
1 \\4\\16 \\64 \\256 \\
\end{array}
\right)
\\
&&
 \; \; \; \; \; \; \; \; \;  \; \; \; \; \; \; \; \; \; \; \; \; \; \; \; \; \; \;
 =\left(
\begin{array}{c}
24/5 \\ -6/5 \\ 4/5 \\ -6/5 \\ 24/5\\
\end{array}
\right).
\end{eqnarray*}

\noindent
D-S. Kim and T. Kim \cite[Eq. (2.10)]{7} proved the following relation
\begin{equation} \label{13}
D_n = \sum_{l=0}^n s_1 (n,l)  B_l.
\end{equation}

\noindent
We can write this relation in the matrix form as follows
\begin{equation} \label{14}
{\bf D}= \bf{S}_1 {\bf B},
\end{equation}

\noindent
where ${\bf D} =\left(D_0 \; D_1 \; \cdots D_n  \right)^T$   and  ${\bf B}=\left( B_0 \; B_1 \; \cdots \; B_n \right)^T,$ respectively, are the matrices for Daehee numbers of the first kind and Bernoulli numbers.

\noindent
For example, if  $0 \leq n \leq 4$ , in (\ref{14}), we have
\begin{equation*}
\left(
\begin{array}{ccccc}
1 & 0 & 0 & 0 & 0\\ 0 & 1 & 0 & 0 & 0 \\ 0 & -1 & 1 & 0 & 0 \\ 0 & 2 & -3 & 1 & 0 \\ 0 & -6 & 11 & -6 & 1
\end{array}
\right)
\left(
\begin{array}{c}
1 \\ -1/2 \\  1/6 \\ 0 \\ -1/30 \\
\end{array}
\right)=\left(
\begin{array}{c}
1 \\ -1/2 \\   2/3 \\ -3/2 \\ 24/5  \\
\end{array}
\right).
\end{equation*}

\noindent
D-S. Kim and T. Kim \cite[Corollary 3]{7} introduced the following result. \\
For $n \geq 0$, we have
\begin{equation} \label{15}
D_n (x)=\sum_{l=0}^n s_1 (n,l) B_l (x).
\end{equation}

\noindent
Eq. (\ref{15}) can be represented in the matrix form as follows
\begin{equation} \label{16}
{\bf D}(x)={\bf S}_1 {\bf B}(x),
\end{equation}

\noindent
where ${\bf D} (x)=\left( D_0 (x) \; D_1 (x) \; \cdots \; D_n (x)\right)^T$ and  ${\bf B}(x)=\left(B_0 (x) \; B_1 (x) \; \cdots \; B_n (x) \right)^T$ are the matrices for Daehee and Bernoulli polynomials.

\noindent
For example, if setting  $0 \leq n \leq 4$ in (\ref{16}), we have
\begin{equation*}
\left(
\begin{array}{ccccc}
1 & 0 & 0 & 0 & 0 \\ 0 & 1 & 0 & 0 & 0 \\ 0 & -1 & 1 & 0 & 0 \\ 0 & 2 & -3 & 1 & 0 \\ 0 & -6 & 11 & -6 & 1\\
\end{array}
\right)\left(
\begin{array}{c}
1 \\ x-1/2 \\ x^2-x+1/6 \\ x^3-3x^2/2+x/2 \\ x^4-2x^3+x^2-1/30
\end{array}
\right)=\left(
\begin{array}{c}
1 \\ x-1/2 \\ x^2-2x+2/3 \\ x^3-9x^2/2+11x/2-3/2 \\ x^4-8x^3+21x^2-20x+24/5\\
\end{array}
\right).
\end{equation*}

\noindent
D-S. Kim and T. Kim \cite[Theorem 4]{7} introduced the following results.\\
For $ m \geq 0$, we have
\begin{equation} \label{17}
B_m = \sum_{n=0}^m s_2 (m,n) D_n.
\end{equation}

\noindent
We can write Eq. (\ref{17}) in the matrix form as follows
\begin{equation} \label{18}
{\bf B} = {\bf S}_2 {\bf D},
\end{equation}

\noindent
where ${\bf S}_2$ is $(n+1)\times (n+1)$ lower triangular matrix for the Stirling numbers of the second kind.

\noindent
For example, if $0 \leq n \leq 4$, in (\ref{18}) we have
\begin{equation*}
\left(
\begin{array}{ccccc}
1 & 0 & 0 & 0 & 0 \\ 0 & 1 & 0 & 0 & 0 \\ 0 & 1 & 1 & 0 & 0 \\ 0 & 1 & 3 & 1 & 0 \\ 0 & 1 & 7 & 6 & 1 \\
\end{array}
\right)\left(
\begin{array}{c}
1 \\ -1/2 \\ 2/3 \\ -3/2 \\ 24/5 \\
\end{array}
\right) = \left(
\begin{array}{c}
1 \\ -1/2 \\ 1/6 \\ 0 \\ -1/30\\
\end{array}
\right).
\end{equation*}

\noindent
\begin{remark}
Using the matrix form (\ref{14}), we easily derive a short proof of Theorem 4 in Kim D-S. Kim and T. Kim \cite{7}. 
Multiplying both sides by the Striling number of second kind as follows
\begin{equation*}
{\bf S}_2 {\bf D} = {\bf S} _2 {\bf S }_1 {\bf B } = {\bf I B}= {\bf B},
\end{equation*}

\noindent
where ${\bf I}$ is the identity matrix of order $n+1$.
\end{remark}

\noindent
D-S. Kim and T. Kim \cite{7} defined the Daehee polynomials of the second kind by the generating function as follows.
\begin{equation} \label{19}
\frac{(1+t)  \log⁡(1+t)}{t} \frac{ 1}{(1+t)^x} = \sum_{n=0}^\infty \hat{D}_n (x) \frac{t^n}{n!}.
\end{equation}

\noindent
In the special case, $x=0, \; \hat{D}_n=\hat{D}_n (0)$ are called the Daehee numbers of the second kind
\begin{equation} \label{20}
\frac{ (1+t)  \log⁡(1+t)}{t} = \sum_{n=0}^\infty \hat{D}_n  \frac{t^n}{n!},
\end{equation}

\noindent
by expanding the left hand side and equating the coefficients of $t^n$ on both sides gives
\begin{equation}
\hat{D}_n=(-1)^{n-1} \frac{ n!}{n(n+1)} , \;    n\geq 1, \; \; \mbox{and} \;  \hat{D}_0 =1.
\end{equation}

\noindent
It is easy to show that $\hat{D}_n$ satisfy the recurrence relation
\begin{equation} \label{22}
(n+1)  \hat {D}_n+(n-1)n \hat{D}_(n-1)=0.
\end{equation}

\noindent
The relation between Daehee numbers of the first and the second kinds can be obtained as
\begin{equation} \label{23}
\hat{D}_n=D_n+n D_{n-1}, \; \; n\geq 1.
\end{equation}
and
\begin{equation} \label{24}
n \hat{D}_n + D_n=0, \; \; n \geq 1.
\end{equation}

\noindent
D-S. Kim and T. Kim \cite[Theorem 5]{7} introduced the following result.\\
For  $n \geq 0$, we have
\begin{equation} \label{25}
\hat{D}_n = \sum_{l=0}^n s_1 (n,l) (-1)^l B_l,
\end{equation}

\noindent
we can write this theorem in the matrix form as follows
\begin{equation} \label{26}
{\bf \hat{D}} = {\bf S}_1 {\bf I} _1 {\bf B},
\end{equation}

\noindent
where ${\bf \hat{D}}= \left( \hat{D}_0 \; \hat{D}_1 \; \cdots  \; \hat{D}_n \right)^T$, is the $(n+1)\times (n+1)$ matrix for Daehee numbers of the second kind, ${\bf I}_1$ is the $(n+1)\times(n+1)$ diagonal matrix its elements $({\bf I_1})_{ii}=(-1)^i, \; 0 \leq i \leq n$,  and ${ \bf B}$ is the $(n+1)\times 1$, matrix for Bernoulli numbers.

\noindent
For example, if setting $ 0 \leq n \leq 4$, in (\ref{26}), we have
\begin{equation*}
\left(
\begin{array}{ccccc}
1 & 0 & 0 & 0 & 0 \\ 0 & 1 & 0 & 0 & 0 \\ 0 & -1 & 1 & 0 & 0 \\ 0 & 2 & -3 & 1 & 0 \\ 0 & -6 & 11 & -6 & 1 \end{array}
\right)\left(
\begin{array}{ccccc}
1 & 0 & 0 & 0 & 0 \\ 0 & -1 & 0 & 0 & 0 \\ 0 & 0 & 1 & 0 & 0 \\ 0 & 0 & 0 & -1 & 0 \\ 0 & 0 & 0 & 0 & 1 \end{array}
\right)\left(
\begin{array}{c}
1 \\ -1/2 \\ 1/6 \\ 0 \\ -1/30 \\
\end{array}\right)=\left(
\begin{array}{c}
1 \\ 1/2 \\ -1/3 \\ 1/2 \\ -6/5
\end{array}
\right).
\end{equation*}

\noindent
D-S. Kim and T. Kim \cite[Theorem 6]{7} introduced the following result.\\
For  $ n \geq 0$, we have
\begin{equation} \label{27}
\hat{D}_n (x) = \sum_{l=0}^n s_1 (n,l)(-1)^l B_l (x).
\end{equation}

\noindent
We can write Eq. (\ref{27}) in the matrix form as follows
\begin{equation} \label{28}
{ \bf \hat{D}}̂(x)={\bf S}_1 {\bf I}_1 {\bf B}(x),
\end{equation}

\noindent
where  ${\bf \hat{D}}(x)=\left( \hat{D}_0 (x) \; \hat{D}_1 (x) \; \cdots \; \hat{D}_n (x) \right)^T$ is the $(n+1)\times 1$, matrix for Daehee polynomials of the second kind, ${\bf B} (x) $ is the $(n+1)\times 1$, matrix for Bernoulli polynomials.

\noindent
For example, if setting, $0 \leq n \leq 4$ in (\ref{28}), we have
\begin{eqnarray*}
&&
\left(
\begin{array}{ccccc}
1 & 0 & 0 & 0 & 0 \\ 0 & 1 & 0 & 0 & 0 \\ 0 & -1 & 1 & 0 & 0 \\ 0 & 2 & -3 & 1 & 0 \\ 0 & -6 & 11 & -6 & 1 \\
\end{array}
\right)\left(
\begin{array}{ccccc}
1 & 0 & 0 & 0 & 0 \\ 0 & -1 & 0 & 0 & 0 \\ 0 & 0 & 1 & 0 & 0 \\ 0 & 0 & 0 & -1 & 0 \\ 0 & 0 & 0 & 0 & 1 \\
\end{array}
\right) \left(
\begin{array}{c}
1 \\ x-1/2 \\ x^2-x+1/6 \\ x^3-3x^2/2+x/2 \\ x^4-2x^3+x^2-1/30 \\
\end{array}
\right)  =
\\
&&
 \; \; \; \; \; \; \; \; \;  \; \; \; \; \; \; \; \; \; \; \; \; \; \; \; \; \; \;
\left(
\begin{array}{c}
1 \\ -x+1/2 \\ x^2-1/3 \\ -x^3-3x^2/2+x/2+1/2 \\ x^4+4x^3+3x^2-2x-6/5
\end{array}
\right).
\end{eqnarray*}

\noindent
D-S. Kim and T. Kim \cite[Theorem 7]{7} introduced the following result.\\
For $m \geq 0$, we have
\begin{equation} \label{29}
B_m (1-x) = \sum_{n=0}^m s_2 (m,n) \hat{ D}_n (x),
\end{equation}

\noindent
where $B_n (1-x)=(-1)^n B_n (x)$.

\noindent
We can write Eq. (\ref{29}) in the matrix form as follows.
\begin{equation} \label{30}
{\bf B}(1-x)={\bf S}_2 {\bf \hat{D}}(x).
\end{equation}
\begin{remark} In fact we can prove Eq. (\ref{29}), D-S. Kim and T. Kim \cite[Theorem 7]{7} by multiplying Eq. (\ref{28}) by ${\bf S}_2$ as follows.
\begin{equation*}
{\bf S}_2 {\bf \hat{D}}(x)= {\bf S}_2 {\bf S}_1 {\bf I}_n {\bf B}(x)={\bf I}_1 {\bf B} (x)= {\bf B} (1-x).
\end{equation*}
\end{remark}

\noindent
For example, if setting $0 \leq n \leq 4$ in (\ref{30}), we have
\begin{equation*}
\left(
\begin{array}{ccccc}
1 & 0 & 0 & 0 & 0 \\ 0 & 1 & 0 & 0 & 0 \\ 0 & 1 & 1 & 0 & 0 \\ 0 & 1 & 3 & 1 & 0 \\ 0 & 1 & 7 & 6 & 1
\end{array}
\right)\left(
\begin{array}{c}
1 \\ -x+1/2 \\ x^2-1/3 \\ -x^3-3x^2/2+x/2+1/2 \\ x^4+4x^3+3x^2-2x-6/5
\end{array}
\right)=\left(
\begin{array}{c}
1 \\ 1/2-x \\ x^2-x+1/6 \\ -x(x-1)(2x-1)/2 \\  x^4-2x^3+x^2-1/30
\end{array}
\right).
\end{equation*}


\section{The lambda-Daehee Numbers and Polynomials}
In this section we introduce the matrix representation for the lambda-Daehee polynomials.  Kim et al. \cite{8} introduce some results for $\lambda$-Daehee polynomial, we can derive these results in matrix representation and prove these results in simply by using matrix forms.
The $\lambda$-Daehee polynomials of the first kind can be defined by the generating function to be, Kim et al. \cite{8},
\begin{equation} \label{31}
\frac{ \lambda \log⁡(1+t)}{(1+t)^{\lambda}-1}  (1+t)^x = \sum_{n=0}^\infty D_{n,\lambda } (x) \frac{ t^n}{n!}.
\end{equation}

\noindent
When $x=0, \; D_{n, \lambda }=D_{n,\lambda} (0)$ are called the $\lambda$-Daehee numbers.
\begin{equation} \label{31a}
\frac{ \lambda  \log⁡(1+t)}{(1+t)^{\lambda}-1}  = \sum_{n=0}^\infty D_{n, \lambda } \frac{  t^n}{n!}.
\end{equation}

\noindent
It is easy to see that $D_n (x)= D_{n,1} (x)$ and $D_n = D_{n,1}.$

\noindent
Kim et al. \cite[Theorem 2]{8} obtained the following results.\\
For $ m \geq 0$, we have
\begin{equation} \label{32}
D_{m,\lambda} (x) = \sum_{l=0}^m s_1 (m,l) {\lambda}^l B_l \left(\frac{x}{\lambda} \right),
\end{equation}

\noindent
and
\begin{equation} \label{33}
{\lambda}^m B_m \left(\frac{x}{\lambda} \right)= \sum_{n=0}^m s_2 (m,n) D_{n, \lambda} (x),
\end{equation}

\noindent
we can write these results in the following matrix form
\begin{equation} \label{34}
{ \bf D}_{\lambda} (x) = {\bf S}_1 {\bf \Lambda B}\left( \frac{x}{\lambda} \right),
\end{equation}
and
\begin{equation} \label{35}
{\bf {\Lambda} } {\bf B} \left( \frac{x}{\lambda} \right)= {\bf S} _2 {\bf D}_{\lambda} (x),
\end{equation}

\noindent
where, ${\bf D}_{\lambda} (x)=\left(D_{0,λ} (x) \; D_{1,λ} (x) \; \cdots \; D_{n,λ} (x)  \right)^T$, is the $(n+1)\times 1,$ matrix for $\lambda$-Daehee polynomials of the first kind, ${\bf B} \left(\frac{x}{\lambda} \right)=\left(B_0 (\frac{x}{\lambda}) \; B_1 (\frac{x}{\lambda})\; \cdots \; B_n (\frac{x}{\lambda}) \right)^T$, is the $(n+1)\times 1,$ matrix for Bernoulli polynomials, when $x\rightarrow \frac{x}{\lambda}$ and ${\bf \Lambda}$ is the $(n+1)\times (n+1),$ diagonal matrix with elements, ${\bf \Lambda}_{ii}=\lambda^i, \;   i=0,1,\cdots,n$.

\noindent
For example, if setting $0 \leq n \leq 4$, in (\ref{34}), we have
\begin{eqnarray*}
&&
\left(
\begin{array}{ccccc}
1 & 0 & 0 & 0 & 0 \\ 0 & 1 & 0 & 0 & 0 \\ 0 & -1 & 1 & 0 & 0 \\ 0 & 2 & -3 & 1 & 0 \\ 0 & -6 & 11 & -6 & 1 \end{array}
\right)\left(
\begin{array}{ccccc}
1 & 0 & 0 & 0 & 0 \\ 0 &\lambda & 0 & 0 & 0 \\ 0 & 0 & \lambda^2 & 0 & 0 \\ 0 & 0 & 0 & \lambda^3 & 0 \\ 0 & 0 & 0 & 0 & \lambda^4 \end{array}
\right)\left(
\begin{array}{c}
1 \\ (x-\lambda/2)/\lambda \\  (\lambda^2/6-\lambda x+x^2)/\lambda^2 \\ x(\lambda-x)(\lambda-2x)/(2\lambda^3) \\ (\lambda^2 x^2-\lambda^4/30-2\lambda x^3+x^4 )/\lambda^4
\end{array}
\right)=
\\
&&
\left(
\begin{array}{c}
1 \\ x-\lambda/2 \\ \lambda^2/6-\lambda x+\lambda/2+x^2-x \\-(\lambda-2x+2)(\lambda-2x+x^2-\lambda x)/2 \\ \lambda^2 x^2-\lambda^4/30-3\lambda^2 x+11\lambda^2/6-2\lambda x^3+9\lambda x^2-11 \lambda x+3\lambda+x^4-6x^3+11x^2-6x
\end{array}
\right).
\end{eqnarray*}
\begin{remark}
In fact, we can prove Eq. (\ref{35}) by multiplying Eq. (\ref{34}) by ${\bf S}_2$ as follows.
\begin{equation*}
{\bf S}_2 {\bf D}_{\lambda} (x)={ \bf S}_2 {\bf S}_1 {\bf \Lambda B} \left(\frac{x}{\lambda} \right)={\bf \Lambda B} \left( \frac{x}{\lambda } \right).
\end{equation*}
\end{remark}
\begin{theorem}
For $m \geq 0$, we have
\begin{equation} \label{36}
D_{m, \lambda} (\lambda x)= m! \sum_{n=0}^m \sum_{ i_1 + i_2 + \cdots + i_n = m } \frac{D_n (x)}{n!}
\left(^\lambda_{i_1} \right) \left(^\lambda_{i_2} \right)\cdots \left(^\lambda_{i_n}  \right).
\end{equation}
\end{theorem}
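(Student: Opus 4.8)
The plan is to work entirely at the level of the generating function \eqref{31} and exploit the fact that, after the substitution $x \mapsto \lambda x$, the $\lambda$-Daehee generating function collapses onto the ordinary Daehee generating function \eqref{1} under a single change of variable. First I would replace $x$ by $\lambda x$ in \eqref{31} to obtain
\begin{equation*}
\sum_{n=0}^\infty D_{n,\lambda}(\lambda x)\,\frac{t^n}{n!} = \frac{\lambda \log(1+t)}{(1+t)^\lambda - 1}\,(1+t)^{\lambda x}.
\end{equation*}
Then I would introduce $s = (1+t)^\lambda - 1$, so that $1 + s = (1+t)^\lambda$ and hence $\log(1+s) = \lambda \log(1+t)$ and $(1+t)^{\lambda x} = (1+s)^x$. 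With these identities the right-hand side becomes exactly
\begin{equation*}
\frac{\log(1+s)}{s}\,(1+s)^x = \sum_{n=0}^\infty D_n(x)\,\frac{s^n}{n!},
\end{equation*}
by the defining relation \eqref{1} for the Daehee polynomials.

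The second step is to re-expand the powers $s^n$ as power series in $t$. Using the binomial series $s = (1+t)^\lambda - 1 = \sum_{k \geq 1} \binom{\lambda}{k} t^k$, I would raise this to the $n$-th power and read off
\begin{equation*}
s^n = \sum_{i_1,\dots,i_n \geq 1} \binom{\lambda}{i_1}\cdots\binom{\lambda}{i_n}\; t^{\,i_1+\cdots+i_n},
\end{equation*}
so that the coefficient of $t^m$ on the right-hand side of the previous display is
\begin{equation*}
\sum_{n=0}^m \frac{D_n(x)}{n!} \sum_{i_1+\cdots+i_n=m} \binom{\lambda}{i_1}\cdots\binom{\lambda}{i_n}.
\end{equation*}
Matching this against the coefficient $D_{m,\lambda}(\lambda x)/m!$ on the left then yields \eqref{36} after multiplying by $m!$. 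The truncation of the outer sum at $n = m$ is automatic, since each constraint $i_j \geq 1$ forces $n \leq i_1 + \cdots + i_n = m$; the term $n=0$ contributes only an empty product, which is consistent with the $m=0$ case.

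The main obstacle, and really the only delicate point, is the justification of the formal substitution $s = (1+t)^\lambda - 1$ together with the subsequent composition of power series. Because $s$ has no constant term (its lowest-order term is $\lambda t$), the substitution is well defined as a formal power series in $t$, and the reindexing that converts $\sum_n D_n(x)\,s^n/n!$ into a genuine series in $t$ is finite at each order $t^m$. I expect the careful bookkeeping of the lower summation limits $i_j \geq 1$ — which is precisely what keeps the convolution finite at order $t^m$ and fixes the range $0 \leq n \leq m$ of the outer sum — to be the one place where an error could slip in.
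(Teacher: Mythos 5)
Your proposal is correct and follows essentially the same route as the paper: both substitute $t \mapsto (1+t)^{\lambda}-1$ into the Daehee generating function \eqref{1}, identify the result with the $\lambda$-Daehee generating function \eqref{31} at argument $\lambda x$, expand $\bigl((1+t)^{\lambda}-1\bigr)^n$ via the binomial series and the Cauchy product, and equate coefficients of $t^m$. The only (harmless) difference is that you keep the binomial expansion as an infinite series $\sum_{k\geq 1}\binom{\lambda}{k}t^k$, valid for general $\lambda$, whereas the paper truncates it at $i=\lambda$, implicitly treating $\lambda$ as a positive integer.
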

\begin{proof}
From (\ref{1}), replacing $(1+t)$ by $(1+t)^\lambda$, we have
\begin{equation*}
\left( \frac{ \lambda \log⁡(1+t)}{(1+t)^\lambda -1} \right) (1+t)^{\lambda x} = \sum_{n=0}^\infty D_n (x)  \frac{ \left((1+t)^\lambda-1 \right)^n}{n!},
\end{equation*}

\noindent
thus from (\ref{31}), we get
\begin{eqnarray*}
\sum_{m=0}^\infty D_{m,\lambda} (\lambda x) \frac{ t^m}{m!} & = &
\sum_{n=0}^\infty \frac{D_n (x)}{n!} \left( \sum_{i=0}^\lambda \left(^\lambda_i \right) t^i -1 \right)^n
\\
& = & \sum_{n=0}^\infty \frac{D_n (x)}{n!}  \left( \sum_{i=1}^\lambda \left(^\lambda_i\right) t^i \right)^n.
\end{eqnarray*}

\noindent
Using Cauchy rule of product of series, we obtain
\begin{eqnarray*}
\sum_{m=0}^\infty D_{m,\lambda} (\lambda x) \frac{ t^m}{m!} & = &
\sum_{n=0}^\infty \frac{D_n (x)}{n!} \sum_{m=n}^\infty \sum_{i_1 + i_2 + \cdots + i_n = m }\left(^\lambda_{i_1}\right) \cdots \left(^\lambda_{i_n} \right) t^m
\\
& = & \sum_{m=0}^\infty \sum_{n=0}^m m! \sum_{i_1 + i_2 + \cdots + i_n = m} \frac{D_n (x)}{n!} \left(^\lambda_{i_1} \right)) \cdots \left(^\lambda_{i_n} \right)  \frac{t^m}{m!}.
\end{eqnarray*}

\noindent
Equating the coefficients of $t^m$ on both sides yields (\ref{36}). This completes the proof.

\end{proof}

\noindent
Setting  $x=0$, in (\ref{36}), we have the following corollary as a special case.

\begin{corollary}
For $m \geq 0$, we have
\begin{equation} \label{37}
D_{m, \lambda} = m! \sum_{n=0}^m \sum_{i_1 + i_2 + \cdots + i_n =m }\frac{D_n}{n!} \left(^\lambda_{i_1} \right) \left(^\lambda_{i_2} \right) \cdots \left(^\lambda_{i_n} \right).
\end{equation}

\end{corollary}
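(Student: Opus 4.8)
The plan is to obtain this corollary as an immediate specialization of the preceding theorem, equation~(\ref{36}), by setting $x=0$. No independent argument is needed; the only points to verify are that the left-hand side and the polynomial factors on the right collapse correctly to the number case, so the bulk of the work is simply bookkeeping of the evaluation at $x=0$.

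First I would record the two defining relations that govern the specialization. By the special case $x=0$ of the $\lambda$-Daehee generating function, equation~(\ref{31a}), we have $D_{n,\lambda}(0)=D_{n,\lambda}$; and by the special case $x=0$ of equation~(\ref{1}) we have $D_n(0)=D_n$. These are the only facts about evaluation at $x=0$ that the substitution will use. I would then substitute $x=0$ directly into equation~(\ref{36}). On the left-hand side the argument becomes $\lambda x=\lambda\cdot 0=0$, so $D_{m,\lambda}(\lambda x)$ reduces to $D_{m,\lambda}(0)=D_{m,\lambda}$. On the right-hand side every occurrence of the polynomial $D_n(x)$ becomes $D_n(0)=D_n$, while the inner multinomial sum over $i_1+i_2+\cdots+i_n=m$ and the binomial factors $\left(^\lambda_{i_1}\right)\left(^\lambda_{i_2}\right)\cdots\left(^\lambda_{i_n}\right)$ do not depend on $x$ and are therefore unchanged. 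Collecting these substitutions produces exactly equation~(\ref{37}).

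Since the corollary is a pure specialization, there is no genuine obstacle: its entire content already resides in the theorem. The single point that requires attention is the composition of arguments on the left-hand side—one is evaluating the $\lambda$-Daehee polynomial at $\lambda x$ rather than at $x$—so that setting $x=0$ indeed sends the argument to $0$ and yields the $\lambda$-Daehee \emph{numbers} $D_{m,\lambda}$, and not some other evaluation of the polynomial. Once this is noted, the derivation is complete.
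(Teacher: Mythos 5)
Your proposal is correct and is exactly the paper's argument: the paper obtains the corollary by the single remark ``Setting $x=0$ in (\ref{36})'', which is the specialization you carry out (with the additional, sound bookkeeping that $D_{m,\lambda}(\lambda\cdot 0)=D_{m,\lambda}(0)=D_{m,\lambda}$ and $D_n(0)=D_n$). Nothing further is needed.
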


\noindent
Kim et al. \cite{8}, defined the $\lambda$-Daehee polynomials of the second kind as follows.
\begin{equation} \label{38}
\frac{ \lambda \log⁡(1+t)}{(1+t)^\lambda-1}  (1+t)^x = \sum_{n=0}^\infty \hat{D}_{n,\lambda}(x)\frac{ t^n}{n!}.
\end{equation}

\noindent
Note that $\hat{D}_{n,1} (x)=\hat{D}_n (x)$.

\noindent
Kim et al. \cite[Theorem 4]{8} introduced the following results.\\
For  $m \geq 0$, we have
\begin{equation} \label{39}
\hat{D}_{m,\lambda} (x) = \sum_{l=0}^m s_1 (m,l) \lambda^l B_l \left(1+\frac{x}{\lambda}\right),                                                    \end{equation}
and
\begin{equation} \label{40}
\lambda^m B_m \left(1+\frac{x}{\lambda} \right)= \sum_{n=0}^m s_2 (m,n) \hat{D}_{n,\lambda} (x),
\end{equation}

\noindent
we can write these results in the following matrix form
\begin{equation} \label{41}
{\bf \hat{D}}_{\lambda} (x)= {\bf S}_1 {\bf \Lambda B}\left(1+\frac{x}{\lambda}\right),
\end{equation}

\noindent
and
\begin{equation} \label{42}
{\bf \Lambda B} \left(1+\frac{x}{\lambda}\right)={\bf S}_2 {\bf \hat{D}}_\lambda (x),
\end{equation}

\noindent
where ${\bf \hat{D}}_\lambda (x) = \left( \hat{D}_{0,\lambda} (x) \; \hat{D}_{1, \lambda} (x) \; \cdots \; \hat{D}_{n,\lambda}(x) \right)^T$, is the $(n+1)\times 1$,  matrix for $\lambda$-Daehee polynomials of the second kind. ${\bf B} \left(1+\frac{x}{\lambda} \right)=\left( B_0 \left( 1+ \frac{x}{\lambda} \right) \; B_1 \left(1+ \frac{x}{\lambda} \right) \; \cdots \; B_n \left( 1 + \frac{x}{\lambda} \right) \right)^T$, is the $(n+1)\times 1$ matrix for Bernoulli Polynomials, when $x \rightarrow 1+\frac{x}{\lambda}$.

\noindent
For example, if setting $0 \leq n \leq 4$, in (\ref{41}), we have
\begin{footnotesize}
\begin{eqnarray*}
&&
\left(
\begin{array}{ccccc}
1 & 0 & 0 & 0 & 0 \\ 0 & 1 & 0 & 0 & 0 \\ 0 & -1 & 1 & 0 & 0 \\ 0 & 2 & -3 & 1 & 0 \\ 0 & -6 & 11 & -6 & 1 \\
\end{array}
\right)\left(
\begin{array}{ccccc}
1 & 0 & 0 & 0 & 0 \\ 0 & \lambda & 0 & 0 & 0 \\ 0 & 0 & \lambda^2 & 0 & 0 \\ 0 & 0 & 0 & \lambda^3 & 0 \\ 0 & 0 & 0 & 0 & \lambda^4
\end{array}
\right)\left(
\begin{array}{c} 1 \\ (x+\lambda/2)/\lambda \\  (x^2+\lambda x+\lambda^2/6)/\lambda^2 \\  x(\lambda+2x)(\lambda+x)/(2\lambda^3) \\  (30\lambda^2 x^2-\lambda^4+60\lambda x^3+30x^4 )/(2\lambda^3)
\end{array}
\right)=
\\
&&
\; \; \;
\left(
\begin{array}{c}
1 \\ \lambda/2+ x \\ \lambda^2/6+\lambda x-\lambda/2+x^2-x \\-(\lambda+2x-2)(\lambda+2x-x^2-\lambda x)/2 \\ \lambda^2 x^2-\lambda^4/30-3\lambda^2 x+11\lambda^2/6+2\lambda x^3-9\lambda x^2+11\lambda x-3\lambda +x^4-6x^3+11x^2-6x\\
\end{array}
\right)
\end{eqnarray*}
\end{footnotesize}
\begin{remark}
In fact, we can prove Eq. (\ref{42}) by multiplying Eq.(\ref{41}) by ${\bf S}_2$ as follows.
\begin{equation*}
{\bf S}_2 {\bf \hat{D}}_{\lambda}  (x)={\bf S}_2 {\bf S}_1 {\bf \Lambda B} \left(1+\frac{x}{\lambda} \right)= {\bf \Lambda B} \left(1+\frac{x}{\lambda} \right).
\end{equation*}
\end{remark}

\section{The Twisted Daehee Numbers and Polynomials}
Park et al. \cite{11} defined the nth twisted Daehee polynomials of the first kind by the generating function to be
\begin{equation} \label{43}
\left( \frac{\log⁡(1+\xi t)}{\xi t}\right) (1+\xi t)^x = \sum_{n=0}^\infty D_{n,\xi} (x)  \frac{  t^n}{n!}.
\end{equation}

\noindent
In the special case, $x=0, D_{n,\xi}  = D_{n,\xi} (0)$ are called the $n$th twisted Daehee numbers of the first kind.
\begin{equation} \label{44}
\left( \frac{\log⁡(1+\xi t)}{\xi t} \right)=\sum_{n=0}^\infty D_{n,\xi } \frac{ t^n}{n!}.
\end{equation}

\noindent
In fact, we can obtain the relation between Daehee polynomials and twisted Daehee polynomials of the first kind as follows.

\noindent
Replacing $t$ with $\xi t$ in Eq. (\ref{1}), we have
\begin{equation} \label{45}
\left( \frac{\log⁡(1+\xi t)}{ \xi t} \right) (1+\xi t)^x= \sum_{n=0}^\infty D_n (x) \frac{\xi^n t^n}{n!}.
\end{equation}

\noindent
From Equations (\ref{45}) and (\ref{43}), equating the coefficients of $t^n$ on both sides, we obtain
\begin{equation} \label{46}
D_{n, \xi} (x)= \xi^n D_n (x), \;     n \geq 0.
\end{equation}

\noindent
Setting $x=0$ in (\ref{46}) we have
\begin{equation} \label{47}
D_{n,\xi}= \xi^n D_n,   \; \;  n≥0.
\end{equation}

\noindent
We can write Eq. (\ref{46}) in the matrix form as follows.
\begin{equation} \label{48}
{\bf D}_{\xi} (x)={\bf \Xi D}(x).
\end{equation}

\noindent
where ${\bf D}_{\xi} (x)$, is the $(n+1)\times 1$, matrix for the twisted Daehee polynomials, ${\bf \Xi}$ is the $(n+1)\times (n+1)$ diagonal matrix with elements ${\bf \Xi}_{ii}=\xi^i,\; i=0,1,\cdots ,n$ , ${\bf D} (x)$, is the $(n+1)\times 1$, matrix for Daehee polynomials.

\noindent
For example, if setting $ 0 \leq n \leq 4$, in (\ref{48}), we have

\begin{footnotesize}
\begin{equation*}
\left(
\begin{array}{ccccc}
1 & 0 & 0 & 0 & 0 \\ 0 & \xi & 0 & 0 & 0 \\ 0 & 0 & \xi^2 & 0 & 0 \\ 0 & 0 & 0 & \xi^3 & 0 \\ 0 & 0 & 0 & 0 & \xi^4 \\
\end{array}
\right)\left(
\begin{array}{c}
1 \\ x-1/2 \\ x^2-2x+2/3 \\ x^3-9x^2/2+11x/2-3/2 \\ x^4-8x^3+21x^2-20x+24/5 \\
\end{array}
\right)=\left(
\begin{array}{c}
1 \\ \xi(x-1/2) \\ \xi^2 (x^2-2x+2/3) \\ \xi^3 (2x-3)(x^2-3x+1)/2 \\ \xi^4 (x^4-8x^3+21x^2-20x+24/5)
\end{array}
\right)
\end{equation*}
\end{footnotesize}

\noindent
From Eq. (\ref{47}) in Equations (\ref{5}) and (\ref{6}), we have
\begin{equation} \label{49}
D_{n,\xi}=(-1)^n \frac{ n!}{n+1}  \xi^n, \; n \geq 0.
\end{equation}

\noindent
and
\begin{equation} \label{50}
(n+1) D_{n,\xi})+n^2 \xi D_{n-1, \xi })=0, \; \;    n \geq 1.
\end{equation}

\noindent
Substituting  from Eq. (\ref{47}) in Eq. (\ref{9}), we have
\begin{equation} \label{51}
D_{n,\xi} (1+x) = D_{n,\xi } (x)+ n \xi^{-1} D_{n-1, \xi} (x).
\end{equation}

\noindent
Park et al. \cite[Eq. 17]{11} proved the following relation
\begin{equation} \label{52}
D_{n,\xi}=\xi^n \sum_{l=0}^n s_1 (n,l) B_l,
\end{equation}

\noindent
which can be written in the following matrix form
\begin{equation} \label{53}
{\bf D}_{\xi} ={\bf \Xi S}_1 {\bf B},
\end{equation}

\noindent
where ${\bf D}_{\xi}=\left(D_{0,\xi} \; D_{1,\xi } \; \cdots \; D_{n, \xi} \right)^T$ is the $(n+1)\times 1$, matrix for the twisted Daehee numbers,  ${\bf S}_1$  and ${\bf B}$ are matrices for Striling numbers of the first kind and Bernoulli numbers, respectively. ${\bf \Xi}$ is the $(n+1)\times (n+1)$ diagonal matrix.

\noindent
For example, if setting  $0 \leq n \leq 4$, in (\ref{53}), we have
\begin{equation*}
\left(
\begin{array}{ccccc}
1 & 0 & 0 & 0 & 0 \\ 0 & \xi & 0 & 0 & 0 \\ 0 & 0 & \xi^2 & 0 & 0 \\ 0 & 0 & 0 & \xi^3 & 0 \\ 0 & 0 & 0 & 0 & \xi^4\\
\end{array}
\right)\left(
\begin{array}{ccccc}
1 & 0 & 0 & 0 & 0 \\ 0 & 1 & 0 & 0 & 0 \\ 0 & -1 & 1 & 0 & 0 \\ 0 & 2 & -3 & 1 & 0 \\ 0 & -6 & 11 & -6 & 1\\
\end{array}
\right)\left(
\begin{array}{c}
1 \\ -1/2 \\ 1/6 \\ 0 \\ -1/30 \\
\end{array}
\right)=\left(
\begin{array}{c}
1 \\ -\xi/2 \\ 2\xi^2/3 \\ -3\xi^3/2 \\ 24\xi^4/5
\end{array}
\right).
\end{equation*}

\noindent
Park et al. \cite[Corollary 3]{11} proved the following result for twisted Daehee polynomials of the first kind.

\noindent
For $n \geq 0$, we have
\begin{equation} \label{54}
D_{n,\xi} (x)=\xi^n \sum_{l=0}^n s_1 (n,l) B_l (x),
\end{equation}

\noindent
we can write Eq. (\ref{54}) in the matrix form as follows.
\begin{equation} \label{55}
{\bf D}_\xi (x)={\bf \Xi S}_1 {\bf B}(x),
\end{equation}

\noindent
where ${\bf D}_\xi (x)=\left(D_{0,\xi} (x) \; D_{1,\xi } (x) \; \cdots \; D_{n,\xi} (x) \right)^T$ is the $(n+1)\times 1$, matrix for the twisted Daehee polynomials,  ${\bf B} (x)$ is the $(n+1)\times 1$, matrix for Bernoulli polynomials.

\noindent
For example, if setting $0 \leq n \leq 4$, in (\ref{55}), we have
\begin{eqnarray*}
&&
\left(
\begin{array}{ccccc}
1 & 0 & 0 & 0 & 0 \\ 0 & \xi & 0 & 0 & 0 \\ 0 & 0 & \xi^2 & 0 & 0 \\ 0 & 0 & 0 & \xi^3 & 0 \\ 0 & 0 & 0 & 0 & \xi^4\\
\end{array}
\right)\left(
\begin{array}{ccccc}
1 & 0 & 0 & 0 & 0 \\ 0 & 1 & 0 & 0 & 0 \\ 0 & -1 & 1 & 0 & 0 \\ 0 & 2 & -3 & 1 & 0 \\ 0 & -6 & 11 & -6 & 1\\
\end{array}
\right) \left(
\begin{array}{c}
1 \\ x-1/2 \\ x^2-x+1/6 \\ x^3-3x^2/2+x/2 \\ x^4-2x^3+x^2-1/30\\
\end{array}
\right)=
\\
&&
\; \; \; \; \; \; \; \; \; \; \; \; \; \; \; \; \; \; \; \; \;
\left(
\begin{array}{c}
1 \\ \xi (x-1/2) \\ \xi^2 (x^2-2x+2/3) \\ \xi^3 (2x-3)(x^2-3x+1)/2 \\ \xi^4 (x^4-8x^3+21x^2-20x+24/5) \\
\end{array}
\right).
\end{eqnarray*}

\noindent
The twisted Bernoulli polynomials are defined as,  Kim \cite{6},
\begin{equation} \label{56}
\left( \frac{t}{\xi e^t-1} \right)e^{xt}= \sum_{n=0}^\infty B_{n,\xi } (x) \frac{ t^n}{n!},  \;   \xi \in T_p,
\end{equation}

\noindent
and the twisted Bernoulli numbers $B_{n,\xi}$ are defined as $B_{n,\xi}  = B_{n,\xi} (0)$.
\begin{equation} \label{56a}
\frac{t}{ \xi e^t-1}= \sum_{n=0}^\infty B_{n,\xi}   \frac{t^n}{n!}, \;    \xi \in T_p.
\end{equation}

\noindent
We introduce the relation between the twisted Daehee polynomials of the first kind and the Bernoulli polynomials in the following Theorem.

\begin{theorem} \label{T4-1}
For $m \geq 0$, we have
\begin{equation} \label{57}
B_m (x) = \sum_{n=0}^m s_2 (m,n) \xi^{-n} D_{n, \xi} (x).
\end{equation}
\end{theorem}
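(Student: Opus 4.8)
The plan is to reduce the twisted identity to the already-established untwisted relation between Bernoulli and Daehee polynomials and then invoke the orthogonality of the two kinds of Stirling numbers.

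First I would use the scaling relation (\ref{46}), namely $D_{n,\xi}(x) = \xi^n D_n(x)$, which gives $\xi^{-n} D_{n,\xi}(x) = D_n(x)$. Substituting this into the right-hand side of (\ref{57}) shows that the claim is equivalent to the purely untwisted polynomial identity
\[
B_m(x) = \sum_{n=0}^m s_2(m,n) D_n(x).
\]
Thus the parameter $\xi$ plays no essential role; it merely cancels, and the entire content of the theorem is this $\xi$-free statement.

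Second I would prove this untwisted identity by inverting the matrix relation (\ref{16}), $\mathbf{D}(x) = \mathbf{S}_1 \mathbf{B}(x)$. Since the lower-triangular Stirling matrices $\mathbf{S}_1$ and $\mathbf{S}_2$ are mutually inverse (the orthogonality relation $\sum_{k} s_2(m,k) s_1(k,n) = \delta_{mn}$, already exploited in the Remark following (\ref{18})), multiplying (\ref{16}) on the left by $\mathbf{S}_2$ yields $\mathbf{S}_2 \mathbf{D}(x) = \mathbf{S}_2 \mathbf{S}_1 \mathbf{B}(x) = \mathbf{B}(x)$, which read off componentwise is exactly $B_m(x) = \sum_{n=0}^m s_2(m,n) D_n(x)$. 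Feeding back $D_n(x) = \xi^{-n} D_{n,\xi}(x)$ then gives (\ref{57}). Alternatively, a self-contained generating-function derivation avoids the matrix inversion: starting from the Daehee generating function (\ref{1}) and substituting $t = e^s - 1$ turns $\log(1+t)/t$ into $s/(e^s-1)$ and $(1+t)^x$ into $e^{xs}$, so the left-hand side becomes the Bernoulli generating function $\sum_m B_m(x)\, s^m/m!$ (the case $\alpha=1$ of (\ref{4})). Expanding the right-hand side $\sum_n D_n(x)\, (e^s-1)^n/n!$ with the Stirling generating function (\ref{3}) and comparing coefficients of $s^m/m!$ reproduces the same identity, after which (\ref{46}) is applied as before.

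I expect no serious obstacle here: the only point requiring care is the justification of $\mathbf{S}_2 \mathbf{S}_1 = \mathbf{I}$ (equivalently the Stirling orthogonality), which is standard, and, in the generating-function route, the bookkeeping of composing formal power series under the substitution $t = e^s - 1$. The genuine mathematical work has already been carried out in establishing (\ref{15})/(\ref{16}) and (\ref{46}); the theorem is essentially a corollary obtained by cancelling $\xi$ and inverting a triangular system.
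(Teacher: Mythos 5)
Your proof is correct, but it follows a genuinely different route from the paper's. The paper proves (\ref{57}) by a single self-contained generating-function computation: substituting $t \mapsto (e^t-1)/\xi$ in the twisted generating function (\ref{43}) turns $\log(1+\xi t)/(\xi t)$ into $t/(e^t-1)$ and $(1+\xi t)^x$ into $e^{xt}$, the resulting powers $(e^t-1)^n$ are expanded via the Stirling generating function (\ref{3}), and coefficients of $t^m/m!$ are matched against the Bernoulli generating function (\ref{59}). Your main argument instead factors out the twist by $D_{n,\xi}(x)=\xi^n D_n(x)$ (Eq. (\ref{46})) and settles the resulting $\xi$-free identity $B_m(x)=\sum_{n=0}^m s_2(m,n)\,D_n(x)$ by left-multiplying (\ref{16}) by $\mathbf{S}_2$ and invoking the orthogonality $\mathbf{S}_2\mathbf{S}_1=\mathbf{I}$, exactly as the paper's Remark after (\ref{18}) does for the numbers. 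Both are sound, and your version involves no circularity: it rests on (\ref{46}) (derived in the paper) and on (\ref{15})/(\ref{16}) (quoted as known from Kim and Kim), not on (\ref{54}), whereas the paper's own later Remark runs the implication in the opposite direction, deducing (\ref{54}) from the matrix form (\ref{60}) of this very theorem. What the paper's substitution argument buys is independence from the cited relation (\ref{15}); what yours buys is brevity, the explicit observation that the parameter $\xi$ is inessential and merely cancels, and a proof in the matrix-representation spirit the paper advertises. Note, finally, that your ``alternative'' generating-function route is essentially the paper's proof with the $\xi$-scaling factored out, so it should not be counted as a distinct approach.
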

\begin{proof}
Replace $t$ by $(e^t-1)/\xi$, in (\ref{43}), we have
\begin{eqnarray} \label{58}
\left( \frac{t}{e^t-1} \right) e^{tx} & = &
\sum_{n=0}^\infty D_{n,\xi} (x)  \frac{(e^t-1)^n}{n!}  \xi^{-n}
\nonumber\\
& = & \sum_{n=0}^\infty D_{n,\xi} (x) \frac{\xi^{-n}}{n!}  n! \sum_{m=n}^\infty s_2 (m,n)  \frac{t^m}{m!}
\nonumber\\
& = & \sum_{m=0}^\infty \sum_{n=0}^m \xi^{-n} D_{n,\xi} (x) s_2 (m,n) \frac{t^m}{m!}.
\end{eqnarray}

\noindent
From (\ref{4}), when $\alpha=1$, we get
\begin{equation} \label{59}
\left( \frac{t}{e^t-1} \right) e^{xt} = \sum_{m=0}^\infty B_m (x) \frac{t^m}{m!},
\end{equation}

\noindent
From  (\ref{58}) and (\ref{59}), we have
\begin{equation*}
B_m (x) = \sum_{n=0}^m \xi^{-n} D_{n,\xi} (x) s_2 (m,n).
\end{equation*}

\noindent
This completes proof (\ref{57}).
\end{proof}

\begin{remark}
In fact, it seems that there is something not correct in Theorem 4, Park et al. \cite{11}.
\end{remark}

\noindent
Moreover, we can represent Equation (\ref{57}), in the following matrix form.
\begin{equation} \label{60}
{\bf B} (x)={\bf S}_2  {\bf \Xi}^{-1} {\bf D}_\xi (x).
\end{equation}
\begin{remark}
We can prove (\ref{54}), Park et al. \cite[Corollary 3]{11}, easily using the matrix form as follows.
Multiplying the both sides of (\ref{60}) by ${\bf S}_1$, we get
\begin{equation*}
{\bf S}_1 {\bf B}(x)={\bf S}_1 {\bf S}_2  {\bf \Xi}^{-1} {\bf D}_\xi (x)= { \bf I} {\bf \Xi}^{-1} {\bf D}_\xi (x)={\bf \Xi}^{-1} {\bf D}_\xi (x)
\end{equation*}
and
\begin{equation*}
{\bf \Xi S}_1 {\bf B}(x)= {\bf \Xi \Xi}^{-1} {\bf D}_\xi (x)={\bf D}_\xi (x).
\end{equation*}

\noindent
This completes the proof of (\ref{54}).
\end{remark}

\noindent
For example, if setting $0 \leq n \leq 4 $, in (\ref{60}), we have
\begin{footnotesize}
\begin{eqnarray*}
&&
\left(
\begin{array}{ccccc}
 1 & 0 & 0 & 0 & 0 \\ 0 & 1 & 0 & 0 & 0 \\ 0 & 1 & 1 & 0 & 0 \\ 0 & 1 & 3 & 1 & 0 \\ 0 & 1 & 7 & 6 & 1 \\
\end{array}
\right)\left(
\begin{array}{ccccc}
 1 & 0 & 0 & 0 & 0 \\ 0 & \xi^{-1} & 0 & 0 & 0 \\ 0 & 0 & \xi^{-2} & 0 & 0 \\ 0 & 0 & 0 & \xi^{-3} & 0 \\ 0 & 0 & 0 & 0 & \xi^{-4}\\
\end{array}
\right)
\left(
\begin{array}{c}
1 \\ \xi(x-1/2) \\ \xi^2 (x^2-2x+2/3) \\ \xi^3  (2x-3)(x^2-3x+1)/2 \\ \xi^4 (x^4-8x^3+21x^2-20x+24/5) \\
\end{array}
\right)
=
\\
&&
\; \; \; \; \; \; \; \; \; \; \; \; \; \; \; \; \; \; \; \; \; \; \; \; \; \; \; \; \; \;
\left(
\begin{array}{c}
1\\ x-1/2 \\ x^2-x+1/6 \\ x(x-1)(2x-1)/2 \\ x^4-2x^3+x^2-1/30))\\
\end{array}
\right).
\end{eqnarray*}
\end{footnotesize}

\noindent
Setting $x=0$ in Theorem \ref{T4-1}, we have the following Corollary.
\begin{corollary}
For $m \geq 0$, we have
\begin{equation}
B_m = \sum_{n=0}^m s_2 (m,n) \xi^{-n} D_{n, \xi}.
\end{equation}
\end{corollary}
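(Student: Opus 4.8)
The plan is to obtain this corollary as the immediate $x=0$ specialization of Theorem \ref{T4-1}. First I would record that the Bernoulli numbers and the twisted Daehee numbers are, by definition, the constant-value members of the corresponding polynomial families: evaluating the generating function (\ref{59}) at $x=0$ gives $B_m(0)=B_m$, while evaluating (\ref{43}) at $x=0$ and comparing with (\ref{44}) gives $D_{n,\xi}(0)=D_{n,\xi}$. With these identifications in hand, substituting $x=0$ into the identity (\ref{57}) of Theorem \ref{T4-1} collapses each side to its numerical analogue and yields the claimed formula directly, since the Stirling coefficients $s_2(m,n)$ and the scaling factors $\xi^{-n}$ do not depend on $x$.

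Alternatively, one may reprove the statement from scratch by repeating the generating-function argument of Theorem \ref{T4-1} verbatim, with numbers in place of polynomials. Here I would replace $t$ by $(e^t-1)/\xi$ in the number generating function (\ref{44}), expand each factor $(e^t-1)^n$ through the Stirling generating function (\ref{3}), reindex the resulting double sum so that the power index $m$ runs outermost, and finally match the coefficients of $t^m$ against the ordinary Bernoulli number generating function obtained from (\ref{59}) at $x=0$, namely $t/(e^t-1)=\sum_{m\ge 0} B_m\,t^m/m!$. Both routes lead to the same bookkeeping and hence to the same conclusion.

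The main obstacle is essentially nonexistent: the result is a routine specialization of an already-established theorem, so no genuinely new estimate or construction is required. The only point deserving an explicit line is the identification of the $x=0$ evaluations $B_m(0)$ and $D_{n,\xi}(0)$ with the numbers $B_m$ and $D_{n,\xi}$; once that is stated, the corollary is immediate.
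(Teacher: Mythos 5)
Your first route is exactly the paper's own proof: the paper obtains this corollary by setting $x=0$ in Theorem \ref{T4-1}, using the conventions $B_m=B_m(0)$ and $D_{n,\xi}=D_{n,\xi}(0)$, just as you do. Your proposal is correct and essentially identical to the paper's argument (the alternative generating-function rederivation you sketch is fine but unnecessary).
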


\noindent
We can investigate a new relation between the twisted Bernoulli polynomials and Bernoulli polynomials as follows.
\begin{theorem} \label{T4-3}
For $n\geq 0$, we have
\begin{equation} \label{61}
\xi^x B_{n,\xi} (x)+\xi^x  \ln⁡(\xi) B_{n+1,\xi} (x)  \left( \frac{1}{n+1}\right)=\sum_{j=n}^\infty B_j (x) \frac{(\ln⁡(\xi) )^{j-n}}{(j-n)!}.
\end{equation}
\end{theorem}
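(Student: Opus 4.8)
The plan is to connect the twisted Bernoulli generating function (\ref{56}) to the ordinary Bernoulli generating function (\ref{59}) through the elementary observation that $\xi e^t = e^{t+\ln\xi}$, so that the twist is nothing but a shift of argument by $\ln\xi$. Write $G(t)=\left(\frac{t}{\xi e^t-1}\right)e^{xt}$ for the left side of (\ref{56}) and $F(w)=\left(\frac{w}{e^w-1}\right)e^{xw}=\sum_{j\ge0}B_j(x)\frac{w^j}{j!}$ for the Bernoulli generating function of (\ref{59}). First I would multiply $G(t)$ by $\xi^x=e^{x\ln\xi}$ and substitute $w=t+\ln\xi$.

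The key step is to verify the clean functional identity
\begin{equation*}
\xi^x\,(t+\ln\xi)\,G(t) = t\,F(t+\ln\xi).
\end{equation*}
Indeed, with $w=t+\ln\xi$ one has $\xi e^t-1=e^{w}-1$ and $\xi^x e^{xt}=e^{x(t+\ln\xi)}=e^{xw}$, so $\xi^x(t+\ln\xi)G(t)=w\cdot\frac{t\,e^{xw}}{e^{w}-1}=t\,F(w)$. The factor $(t+\ln\xi)$ is introduced precisely to clear the denominator $w$ that separates $\frac{t}{e^{w}-1}e^{xw}$ from $F(w)=\frac{w}{e^{w}-1}e^{xw}$; multiplying through by it is exactly what will produce the two-term combination $B_{n,\xi}(x)+\frac{\ln\xi}{n+1}B_{n+1,\xi}(x)$ on the left of (\ref{61}).

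Next I would extract the coefficient of $t^{n+1}$ from each side. On the left, expanding $\xi^x(t+\ln\xi)\sum_{m}B_{m,\xi}(x)\frac{t^m}{m!}$ and collecting the contribution from $m=n$ (via the factor $t$) and from $m=n+1$ (via the factor $\ln\xi$) gives $\xi^x\big[\frac{B_{n,\xi}(x)}{n!}+\frac{\ln\xi\,B_{n+1,\xi}(x)}{(n+1)!}\big]$. On the right, the coefficient of $t^{n+1}$ in $t\,F(t+\ln\xi)$ equals the coefficient of $t^{n}$ in the Taylor expansion $F(t+\ln\xi)=\sum_{k\ge0}\frac{F^{(k)}(\ln\xi)}{k!}t^{k}$, namely $\frac{1}{n!}F^{(n)}(\ln\xi)$. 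Since differentiating $F(w)=\sum_{j\ge0}B_j(x)\frac{w^j}{j!}$ term by term yields $F^{(n)}(\ln\xi)=\sum_{j=n}^{\infty}B_j(x)\frac{(\ln\xi)^{j-n}}{(j-n)!}$, equating the two coefficients and multiplying by $n!$ produces (\ref{61}) immediately.

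The one delicate point, which I expect to be the main obstacle, is the justification of the term-by-term Taylor expansion of $F(t+\ln\xi)$ and of the resulting infinite series on the right of (\ref{61}): this is legitimate provided $\ln\xi$ lies inside the disk of convergence of $F$ (equivalently $|\ln\xi|<2\pi$), or, if one prefers, the whole computation may be read as an identity of formal power series in $t$ with $\ln\xi$ treated as a parameter. As a sanity check on the unusual infinite right-hand side I would record the case $n=0$, where $B_{0,\xi}(x)=0$ and $B_{1,\xi}(x)=\frac{1}{\xi-1}$ reduce both sides to $\frac{\xi^x\ln\xi}{\xi-1}=F(\ln\xi)$, confirming the identity.
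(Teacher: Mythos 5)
Your proof is correct and follows essentially the same route as the paper's: both substitute $t \mapsto t+\ln\xi$ into the ordinary Bernoulli generating function, use $\xi e^{t}=e^{t+\ln\xi}$ to make the twisted generating function appear, and then compare coefficients. The only cosmetic difference is that you multiply through by $(t+\ln\xi)$ and read off the coefficient of $t^{n+1}$, whereas the paper splits off a $\frac{\ln\xi}{t}$ factor and equates coefficients of $t^{n}$; your arrangement has the minor advantage of avoiding the formal $t^{-1}$ term entirely.
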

\begin{proof}
From (\ref{4}), when $\alpha=1$, we get
\begin{equation*}
\left( \frac{t}{e^t-1} \right) e^{xt}=\sum_{m=0}^\infty B_m (x)\frac{t^m}{m!},
\end{equation*}

\noindent
and replace $t$ by $(t+\ln \xi )$, we have
\begin{eqnarray*}
\left(\frac{t+\ln⁡ξ}{ \xi e^t-1} \right) e^{x(t+\ln \xi ξ)}  & = & \sum_{m=0}^\infty B_m (x) \frac{(t+\ln⁡ \xi)^m}{m!},
\\
\left(  \frac{t}{\xi e^t-1}+\frac{\ln \xi }{\xi e^t-1} \right) e^{xt} \xi^x & = & \sum_{m=0}^\infty \frac{ B_m (x)}{m!} \sum_{i=0}^m \left(^m_i\right) (\ln \xi )^{m-i} t^i ,
\\
\xi^x \left( \frac{t}{\xi e^t-1} \right) e^{xt}+\frac{\xi^x  \ln\xi}{t} \left( \frac{t}{ \xi e^t-1} \right) e^{xt} & = & \sum_{i=0}^\infty \sum_{m=i}^\infty \frac{B_m (x))}{m!} \left(^m_i\right) (\ln \xi )^{m-i} t^i ,
\\
\xi^x \sum_{n=0}^\infty B_{n,\xi} (x)  \frac{t^n}{n!} + \frac{\xi^x  \ln\xi }{t} \sum_{n=0}^\infty B_{n,\xi} (x)  \frac{t^n}{n!} & = & \sum_{i=0}^\infty \sum_{m=i}^\infty \frac{B_m (x)}{(m-i)!} (\ln \xi )^{m-i}  \frac{t^i}{i!}.
\end{eqnarray*}

\noindent
By equating the coefficients of $t^n$ on both sides gives
\begin{equation*}
\xi^x B_{n,\xi} (x)+\left( \frac{\xi^x  \ln⁡(\xi) }{n+1 } \right) B_{n+1,\xi} (x) = \sum_{m=n}^\infty \frac{(\ln \xi )^{m-n}}{(m-n)!}  B_m (x), \; n\geq 0.
\end{equation*}

\noindent
This completes the proof.
\end{proof}

\noindent
Setting $x=0$ in Theorem \ref{T4-3}, we have the following Corollary.
\begin{corollary}
For  $n \geq 0$, we have
\begin{equation} \label{62}
B_{n,\xi}+\left( \frac{\ln⁡(\xi)}{n+1 } \right) B_{n+1,\xi}= \sum_{m=n}^\infty \frac{(\ln \xi  )^{m-n}}{(m-n)!}  B_m, \;   n\geq 0.
\end{equation}
\end{corollary}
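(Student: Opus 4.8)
The plan is simply to specialize the polynomial identity of Theorem~\ref{T4-3} to the point $x=0$. Putting $x=0$ in (\ref{61}) makes the prefactor $\xi^{x}$ equal to $1$, and the standing conventions $B_{n,\xi}=B_{n,\xi}(0)$ and $B_{m}=B_{m}(0)$ turn each twisted Bernoulli polynomial and each ordinary Bernoulli polynomial into the corresponding number. The left-hand side $\xi^{x}B_{n,\xi}(x)+\xi^{x}\ln(\xi)B_{n+1,\xi}(x)/(n+1)$ collapses to $B_{n,\xi}+\ln(\xi)B_{n+1,\xi}/(n+1)$, while the right-hand side $\sum_{j=n}^{\infty}B_{j}(x)(\ln\xi)^{j-n}/(j-n)!$ becomes $\sum_{m=n}^{\infty}B_{m}(\ln\xi)^{m-n}/(m-n)!$, which is exactly (\ref{62}). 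Thus the Corollary is immediate from the Theorem, and this is the route I would present first.

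For completeness I would also record a self-contained derivation that does not invoke the polynomial case, obtained by running the argument of Theorem~\ref{T4-3} directly on the numbers. Starting from the Bernoulli-number generating function $t/(e^{t}-1)=\sum_{m=0}^{\infty}B_{m}t^{m}/m!$, which is (\ref{4}) with $\alpha=1$ and $x=0$, I would substitute $t\mapsto t+\ln\xi$. Since $e^{t+\ln\xi}=\xi e^{t}$, the left side becomes $(t+\ln\xi)/(\xi e^{t}-1)$, which I would split as $t/(\xi e^{t}-1)+(\ln\xi/t)\cdot t/(\xi e^{t}-1)$ and rewrite using the twisted-Bernoulli-number generating function (\ref{56a}). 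On the right side I would expand $(t+\ln\xi)^{m}$ by the binomial theorem and reindex the double sum.

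The only point requiring care is the $1/t$ factor multiplying the second term on the left: it shifts the series by one index and so produces the term $\ln(\xi)B_{n+1,\xi}/(n+1)$ rather than a term in $B_{n,\xi}$. Equating the coefficients of $t^{n}$ on both sides then yields (\ref{62}). I do not expect any genuine obstacle here; the whole computation is a coefficient-matching bookkeeping exercise, and the infinite sum on the right is to be read as a formal power series identity, exactly as in the proof of Theorem~\ref{T4-3}.
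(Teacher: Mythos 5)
Your first route is exactly the paper's proof: the Corollary is obtained by setting $x=0$ in Theorem~\ref{T4-3}, whereupon $\xi^{x}=1$ and the conventions $B_{n,\xi}=B_{n,\xi}(0)$, $B_{m}=B_{m}(0)$ give (\ref{62}) immediately. Your supplementary self-contained derivation is just the proof of Theorem~\ref{T4-3} specialized to $x=0$, so nothing genuinely different is added; the proposal is correct and matches the paper.
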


\noindent
Equation (\ref{62}) gives a new connection between twisted Bernoulli numbers and Bernoulli numbers.\\

\noindent
Park et al. \cite{11} introduced the $n$th twisted Daehee polynomials of the second kind as follows:
\begin{equation} \label{63}
\left( \frac{(1+\xi t)  \log⁡(1+\xi t)}{ \xi t}  \right) \frac{1}{(1+\xi t)^x} = \sum_{n=0}^\infty \hat{D}_{n,\xi} (x) \frac{t^n}{n!}.
\end{equation}

\noindent
In special, if $x=0, \; \hat{D}_{n,\xi }= \hat{D}_{n,ξ} (0)$, we have the twisted Daehee numbers of second kind.
\begin{equation} \label{63a}
\frac{(1+\xi t)  \log⁡(1+\xi t)}{\xi t} = \sum_{n=0}^\infty \hat{D}_{n,\xi} \frac{t^n}{n!}.
\end{equation}

\noindent
Park et al. \cite[Theorem 5]{11}, proved the following theorem.\\
For $n \geq 0$, we have
\begin{equation} \label{64}
\hat{D}_{n,\xi }=\xi^n \sum_{l=0}^n s_1 (n,l)  (-1)^l B_l.
\end{equation}

\noindent
We can write this theorem in the following matrix form.
\begin{equation} \label{65}
{\bf \hat{D}}_\xi= {\bf \Xi S}_1 {\bf I}_1 {\bf B},
\end{equation}

\noindent
where ${\bf \hat{D}}_\xi=\Big( \hat{D}_{0,\xi} \; \hat{D}_{1,\xi } \; \cdots  \; \hat{D}_{n,\xi } \Big)$ is the $(n+1)\times 1$, matrix for twisted Daehee numbers of the second kind and ${\bf I}_1$ is the $(n+1)\times (n+1)$ diagonal matrix with elements $({\bf I}_1)_{ii}=(-1)^i, i=0,1,\cdots,n$.

\noindent
For example, if setting $0\leq n\leq 4$ , in (\ref{65}), we have
\begin{footnotesize}
\begin{equation*}
\left(
\begin{array}{ccccc}
1 & 0 & 0 & 0 & 0 \\ 0 & \xi & 0 & 0 & 0 \\ 0 & 0 & \xi^2 & 0 & 0 \\ 0 & 0 & 0 & \xi^3 & 0 \\ 0 & 0 & 0 & 0 & \xi^4\\
\end{array}
\right)\left(
\begin{array}{ccccc}
 1 & 0 & 0 & 0 & 0 \\ 0 & 1 & 0 & 0 & 0 \\ 0 & -1 & 1 & 0 & 0 \\ 0 & 2 & -3 & 1 & 0 \\ 0 & -6 & 11 & -6 & 1\\
\end{array}
\right)\left(
\begin{array}{ccccc}
1 & 0 & 0 & 0 & 0 \\ 0 & -1 & 0 & 0 & 0 \\ 0 & 0 & 1 & 0 & 0 \\ 0 & 0 & 0 & -1 & 0 \\ 0 & 0 & 0 & 0 & 1 \\
\end{array}
\right)
\left(
\begin{array}{c}
1 \\ -1/2 \\ 1/6 \\ 0 \\ -1/30 \\
\end{array}
\right)=\left(
\begin{array}{c}
1 \\ \xi/2 \\ -\xi^2/3 \\ \xi^3/2 \\-6\xi^4/5\\
\end{array}
\right).
\end{equation*}
\end{footnotesize}

\noindent
Park et al. \cite[Theorem 6]{11} proved the following theorem.\\
For $n \geq 0$, we have
\begin{equation} \label{66}
\hat{D}_{n,\xi} (x)=\xi^n \sum_{l=0}^n s_1 (n,l) (-1)^l B_l (x).
\end{equation}

\noindent
We can write this theorem in the following matrix form
\begin{equation} \label{67}
{\bf \hat{D}}_\xi (x)={\bf \Xi S}_1 {\bf I}_1 {\bf B}(x),
\end{equation}

\noindent
where ${\bf \hat{D}}_\xi (x)=\left( \hat{D}_{0,\xi} (x) \; \hat{D}_{1,\xi} (x) \; \cdots  \; \hat{D}_{n,\xi } (x) \right)$ is the$(n+1)\times 1$, matrix for twisted Daehee polynomials of the second kind.

\noindent
For example, if setting  $0 \leq n \leq 4$, in (\ref{67}), we have
\begin{scriptsize}
\begin{eqnarray*}
&&
\left(
\begin{array}{ccccc}
1 & 0 & 0 & 0 & 0 \\ 0 & \xi & 0 & 0 & 0 \\ 0 & 0 & \xi^2 & 0 & 0 \\ 0 & 0 & 0 & \xi^3 & 0 \\ 0 & 0 & 0 & 0 & \xi^4\\
\end{array}
\right)\left(
\begin{array}{ccccc} 1 & 0 & 0 & 0 & 0 \\ 0 & 1 & 0 & 0 & 0 \\ 0 & -1 & 1 & 0 & 0 \\ 0 & 2 & -3 & 1 & 0 \\ 0 & -6 & 11 & -6 & 1 \\
\end{array}
\right)\left(
\begin{array}{ccccc}
1 & 0 & 0 & 0 & 0 \\ 0 & -1 & 0 & 0 & 0 \\ 0 & 0 & 1 & 0 & 0 \\ 0 & 0 & 0 & -1 & 0 \\ 0 & 0 & 0 & 0 & 1\\
\end{array}
\right)\left(
\begin{array}{c}
1 \\ x-1/2 \\ x^2-x+1/6 \\ x^3-(3x^2)/2+x/2 \\ x^4-2x^3+x^2-1/30\\
\end{array}
\right)=
\\
&&
\; \; \; \; \; \; \; \; \; \; \; \; \; \; \; \; \; \; \; \; \; \;
\left(
\begin{array}{c}
1 \\ -\xi (x-1/2) \\ \xi^2 (x^2-1/3) \\-(\xi^3/2)(2x+1)(x^2+x-1) \\ \xi ^4 (x^4+4x^3+3x^2-2x-6/5) \\
\end{array}
\right).
\end{eqnarray*}
\end{scriptsize}

\noindent
We can obtain the relation between Daehee polynomials and twisted Daehee polynomials of the second kind as follows.
\begin{proposition}
For $n\geq 0$, we have
\begin{equation} \label{68}
\hat{D}_{n,\xi} (x)=\xi^n \hat{D}_n (x), \;     n \geq0.
\end{equation}
\end{proposition}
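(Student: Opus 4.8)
The plan is to mirror exactly the argument that established Eq.~(\ref{46}) for the first kind, now applied to the second-kind generating functions (\ref{19}) and (\ref{63}). First I would start from the defining generating function (\ref{19}) of the Daehee polynomials of the second kind and substitute $\xi t$ for $t$ throughout. This yields
\begin{equation*}
\left( \frac{(1+\xi t)\log(1+\xi t)}{\xi t} \right) \frac{1}{(1+\xi t)^x} = \sum_{n=0}^\infty \hat{D}_n(x) \frac{\xi^n t^n}{n!},
\end{equation*}
since the right-hand side of (\ref{19}) has each $t^n$ replaced by $(\xi t)^n = \xi^n t^n$.

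Next I would compare the left-hand side of this identity with the left-hand side of (\ref{63}): they are literally the same expression. Hence the two series representations must agree, giving
\begin{equation*}
\sum_{n=0}^\infty \hat{D}_{n,\xi}(x) \frac{t^n}{n!} = \sum_{n=0}^\infty \xi^n \hat{D}_n(x) \frac{t^n}{n!}.
\end{equation*}
Equating the coefficients of $t^n$ on both sides then produces $\hat{D}_{n,\xi}(x) = \xi^n \hat{D}_n(x)$, which is exactly (\ref{68}).

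There is no genuine obstacle here: the result is a purely formal consequence of the scaling $t \mapsto \xi t$, identical in structure to the first-kind relation (\ref{46}). The only point requiring a word of care is the legitimacy of the substitution inside a formal power series and the term-by-term identification of coefficients, but this is routine for generating functions of this type and is already used implicitly throughout the paper. Setting $x=0$ would immediately recover the corresponding statement for the twisted Daehee numbers of the second kind, namely $\hat{D}_{n,\xi} = \xi^n \hat{D}_n$, consistent with the analogous numeric relations (\ref{47}) and (\ref{68}).
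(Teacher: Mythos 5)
Your proof is correct and follows essentially the same route as the paper: substitute $\xi t$ for $t$ in the second-kind generating function, observe the left-hand side coincides with that of (\ref{63}), and equate coefficients of $t^n$. (The paper's own proof cites Eq.~(\ref{18}) for this substitution, which is a typo for (\ref{19}); you reference the correct equation.)
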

\begin{proof}
Replacing $t$ with $\xi t$ in (\ref{18}), we have
\begin{equation} \label{69}
\left( \frac{(1+ \xi t)  \log⁡(1+\xi t)}{\xi t} \right)  \frac{1}{(1+\xi t)^x} = \sum_{n=0}^\infty \hat{D}_n (x)  \frac{ \xi^n t^n}{n!}.
\end{equation}

\noindent
From Equations (\ref{69}) and (\ref{63}), equating the coefficients of $t^n$ on both sides, we obtain  (\ref{68}).
\end{proof}

\noindent
We can write Eq. (\ref{68}) in the matrix form as follows.
\begin{equation} \label{70}
{\bf \hat{D}}_\xi (x) = {\bf \Xi \hat{D}}(x).
\end{equation}

\noindent
where  ${\bf \hat{D}}_\xi (x)$, is the $(n+1)\times 1$, matrix for the twisted Daehee polynomials of the second kind, ${\bf \Xi} $ is the $(n+1)\times (n+1)$ diagonal matrix, ${\bf \hat{D}}(x)$, is the $(n+1)\times 1$,  matrix for Daehee polynomials of the second kind.

\noindent
For example, if setting $0 \leq n \leq 4$, in (\ref{70}), we have
\begin{equation*}
\left(
\begin{array}{ccccc}
1 & 0 & 0 & 0 & 0 \\ 0 & \xi & 0 & 0 & 0 \\ 0 & 0 & \xi^2 & 0 & 0 \\ 0 & 0 & 0 & \xi^3 & 0 \\ 0 & 0 & 0 & 0 & \xi^4\\
\end{array} \right)\left(
\begin{array}{c}
1 \\-x+1/2 \\ x^2-1/3 \\ -x^3-3x^2/2+x/2+1/2 \\ x^4+4x^3+3x^2-2x-6/5\\
\end{array}
\right)=\left(
\begin{array}{c}
1 \\ -\xi (x-1/2) \\ \xi^2 (x^2-1/3) \\ -(\xi^3/2)(2x+1)(x^2+x-1) \\ \xi^4 (x^4+4x^3+3x^2-2x-6/5)
\\
\end{array}
\right).
\end{equation*}


\bibliographystyle{plain}

\end{document}